\providecommand{\U}[1]{\protect\rule{.1in}{.1in}}
\newtheorem{theorem}{Theorem}
\newtheorem{definition}[theorem]{Definition}
\newtheorem{lemma}[theorem]{Lemma}
\newtheorem{proposition}[theorem]{Proposition}
\newtheorem{remark}[theorem]{Remark}
\newenvironment{proof}[1][Proof]{\noindent\textbf{#1.} }{\ \rule{0.5em}{0.5em}}
\begin{document}

\title{On the critical points of the energy functional on vector fields of a
Riemannian manifold}
\author{Giovanni Nunes
\and Jaime Ripoll}
\date{February 11, 2017}
\maketitle

\begin{abstract}
Given a compact Lie subgroup $G$ of the isometry group of a compact Riemannian
manifold $M$ with a Riemannian connection $\nabla,$ it is introduced a
$G-$symmetrization process of a vector field of $M$ and it is proved that the
critical points of the energy functional
\[
F(X):=\frac{\int_{M}\left\Vert \nabla X\right\Vert ^{2}dM}{\int_{M}\left\Vert
X\right\Vert ^{2}dM}%
\]
on the space of $\ G-$invariant vector fields are critical points of $F$ on
the space of all vector fields of $M,$ and that this inclusion may be strict
in general. One proves that the infimum of $F$ on $\mathbb{S}^{3}$ is not
assumed by a $\mathbb{S}^{3}-$invariant vector field. It is proved that the
infimum of $F$ on a sphere $\mathbb{S}^{n},$ $n\geq2,$ of radius $1/k,$ is
$k^{2},$ and is assumed by a vector field invariant by the isotropy subgroup
of the isometry group of $\mathbb{S}^{n}$ at any given point of $\mathbb{S}%
^{n}.$ It is proved that if $G$ is a compact Lie subgroup of the isometry
group of a compact rank $1$ symmetric space $M$ which leaves pointwise fixed a
totally geodesic submanifold of dimension bigger than or equal to $1$ then all
the critical points of $F$ are assumed by a $G-$invariant vector field.

Finally, it is obtained a characterization of the spheres by proving that on a
certain class of Riemannian compact manifolds $M$ that contains rotationally
symmetric manifolds and rank $1$ symmetric spaces$,$ with positive Ricci
curvature $\operatorname*{Ric}\nolimits_{M}$, $F$ has the lower bound
$\operatorname*{Ric}\nolimits_{M}/\left(  n-1\right)  $ among the $G-$
invariant vector fields, where $G$ is the isotropy subgroup of the isometry
group of $M$ at a point of $M,$ and that his lower bound is attained if and
only if $M$ is a sphere of radius $1/\sqrt{\operatorname*{Ric}\nolimits_{M}}.$

\end{abstract}

\section{Introduction}

\qquad Let $M$ be a compact, orientable, $n-$dimensional, $n\geq2,$
$C^{\infty}$ manifold with a Riemannian metric and let $\nabla$ be the
Riemannian connection of $M.$ In this paper we study the critical points of
the energy of $\nabla$ acting on the space $C^{\infty}(TM)$ of $C^{\infty}$
vector fields of $M$ with unit $L^{2}$ norm. Precisely, we study the critical
points of the functional
\[
F(X)=\int_{M}\left\Vert \nabla X\right\Vert ^{2}dM
\]
on the space of vector fields $X\in C^{\infty}(TM)$ such that%
\[
\int_{M}\left\Vert X\right\Vert ^{2}dM=1.
\]

It is well known that the critical values of $F$ are the eingenvalues of the
so called rough Laplacian $-\operatorname*{div}\nabla$ of $M$ \cite{U} and it
follows from the spectral theory for linear elliptic operators that they
constitute an increasing sequence $0\leq\delta_{1}<\delta_{2}<\cdots
\rightarrow+\infty$ (counted with multiplicity) which are assumed by
$C^{\infty}$ eigenvector fields$.$ Moreover, if $M$ has no parallel vector
fields then the infimum $\delta_{1}$ of $F$ is positive.

The search of geometric estimates of the spectrum of\ elliptic linear
operators is an active topic of investigation in Geometric Analysis, the
Laplacian being already a classical and well studied one \cite{R}. As to the
study of the rough Laplacian operator, it seems to the authors that no
attention has been paid so far. We have not found in the literature a
description of its eigenvalues even in the simplest Riemannian spaces as the spheres.

In this paper we study the critical points of $F$ on Riemannian compact
manifolds admitting a nontrivial isometry group. A simple but fundamental idea
here is to introduce a process of symmetrization of a vector field of $M$ by a
compact Lie subgroup $G$ of the isometry group $\operatorname*{Iso}(M)$ of
$M$. We then use this symmetrization to prove that, apart some exceptional
cases where the symmetrization process leads to zero vector fields, the
critical points of $F$ on the space of $G-$invariant vector fields are in the
spectrum of $F$ (Proposition \ref{css1}). What is somewhat surprising is that
this inclusion may be strict, even for the infimum: We prove that the infimum
of $F$ in the unit sphere $\mathbb{S}^{3}$ is not realized by a $\mathbb{S}%
^{3}-$ invariant vector field, considering $\mathbb{S}^{3}$ as a Lie subgroup
of $O(4)$ (Remark \ref{by}); in other words, left invariant vector fields of
$\mathbb{S}^{3}$ (with a bi-invariant metric) are critical points of $F$ but
are not ones with least energy. This raises the problem of whether the infimum
or more generally the spectrum of $F$ is assumed by $G-$invariant vector
fields for a given compact Lie subgroup $G$ of the isometry group of $M$.

We prove that the orthogonal projection $V$ of a nonzero vector $v$ of
$\mathbb{R}^{n+1}$ on $T\mathbb{S}^{n}(1/k),$ where $\mathbb{S}^{n}(1/k)$ is a
sphere of radius $1/k,$ realizes the infimum of $F$ on $C^{\infty}\left(
T\mathbb{S}^{n}(1/k)\right)  $ and that this infimum is $k^{2}$ (Theorem
\ref{inf}) (we note that $V$ is invariant by the isotropy subgroup of $O(n+1)$
that leaves fixed the point $v/\left(  k\left\vert v\right\vert \right)
\in\mathbb{S}^{n}(1/k)).$ Moreover, we prove that if $X$ is a Killing field of
$\mathbb{S}^{n}(1/k)$ then $F(X)=(n-1)k^{2}.$ It then follows that $X$
minimizes the energy if and only if $n=2.$ As a consequence, if $\mathcal{S}%
_{n}\subset C^{\infty}(T\mathbb{S}^{n}(1/k))$ denotes the space of eigenvector
fields associated to $k^{2}$ then $\dim\mathcal{S}_{2}\geq\dim O(3)+\dim
\mathbb{R}^{3}=6.$ By using Fourier series it is proved in \cite{BR} that
actually $\dim\mathcal{S}_{2}=6$.

We also prove that if $M$ is a compact rank $1$ symmetric space and if $G$ is
a Lie subgroup of $\operatorname*{Iso}\left(  M\right)  $ which leaves
pointwise fixed a totally geodesic submanifold of dimension bigger than or
equal to $1$ then all the critical points of $F$ are realized by a
$G-$invariant vector fields (Theorem \ref{main}). We note that there are many
subgroups $G$ of $\operatorname*{Iso}(M)$ satisfying this condition. For
example, any totally geodesic $m-$dimensional submanifold of $\mathbb{S}%
^{n}(1/k)$, $1\leq m\leq n-2,$ is the fixed point of a compact subgroup of
$O(n+1)$ isomorphic to $O(n-m).$ More generally, any rank $1$ symmetric space
is plenty of totally geodesic submanifolds all of them being the fixed points
of some compact Lie subgroup of the isometry group of the space (see
\cite{He}). We also mention the groups of reflections on totally geodesic
submanifolds, which are finite groups isomorphic to $\mathbb{Z}_{2}.$

In the last part of the paper we obtain a characterization of the sphere as a
space where $F$ attains the infimum of the energy on a certain class of
Riemannian manifolds as explained next. We shall say that $M$ is two point
symmetric with center $p\in M$ if the isotropy subgroup $\operatorname*{Iso}%
\nolimits_{p}(M)$ of $\operatorname*{Iso}(M)$ at $p$ is isomorphic to the
isotropy subgroup of the isometry group of a rank $1$ symmetric space, that
is, $\operatorname*{Iso}\nolimits_{p}(M)$ is isomorphic to some of the
following Lie groups: $O(n),$ $U(1)\times U(n-1),$ $Sp(1)\times Sp(n-1)$ or
$\operatorname*{Spin}(9)$ (see \cite{He} and Definition \ref{def} ahead)$.$
Note that when $\operatorname*{Iso}\nolimits_{p}(M)=O(n)$ then $M$ is a
rotationally symmetric space (see \cite{Cho})$.$ Symmetric spaces of rank $1$
satisfy the so called two point homogeneous property and hence are also known
as two point homogeneous spaces.

We prove that if $M$ is a two point symmetric space with center $p$ and with
Ricci curvature $\operatorname*{Ric}\nolimits_{M}$ satisfying
$\operatorname*{Ric}\nolimits_{M}\geq(n-1)k^{2},$ then the infimum of $F$ on
the space of $\operatorname*{Iso}\nolimits_{p}(M)-$invariant vector fields is
bigger than or equal to $k^{2}$ and the equality holds if and only if $M$ is a
sphere of radius $1/k$ (Theorem \ref{ricci}).

\section{\label{sp}A general result}

\qquad Let $M$ be a compact Riemannian manifold of dimension $n\geq2$. We
choose on the full isometry group $\operatorname*{Iso}\left(  M\right)  $ of
$M$ a fixed left invariant Riemannian metric. We consider on any Lie subgroup
of $\operatorname*{Iso}\left(  M\right)  $ the left invariant Riemannian
metric induced by the one of $\operatorname*{Iso}\left(  M\right)  .$

Let $G$ be a compact Lie subgroup of $\operatorname*{Iso}\left(  M\right)  $.
Given a vector field $V\in C^{\infty}(TM)$, the $G-$symmetrization of $V$ is
the vector field $V_{G}$ defined by setting, at a given $p\in M,$%
\[
\left\langle V_{G}(p),u\right\rangle =\frac{1}{\operatorname*{Vol}(G)}\int
_{G}\left\langle (dg_{p})^{-1}V(g(p)),u\right\rangle dg
\]
where $u\in T_{p}M.$ We note that $G$ may be finite case in which $V_{G}$ is
given by
\[
V_{G}(p)=\frac{1}{\left\vert G\right\vert }\sum_{g\in G}dg_{p}^{-1}(V(g(p)),
\]
where $\left\vert G\right\vert $ denotes the number of elements of $G.$ Note
that $V_{G}\in C^{\infty}(TM)$ is $G-$invariant, that is, $V_{G}%
(g(p))=dg_{p}\left(  V_{G}(p)\right)  $ for all $p\in M$ and $g\in G.$
Moreover, by the linearity of $\operatorname{div}\nabla$ and of the
integration process we have%
\[
\left(  \operatorname{div}\nabla V\right)  _{G}=\operatorname{div}\nabla
V_{G}.
\]
In particular, if $V$ satisfies $\operatorname{div}\nabla V=-\lambda V$ then
$\operatorname{div}\nabla V_{G}=-\lambda V_{G}.$ We call $V_{G}$ the $G-$mean
of $V.$

We observe that depending on $M$ and $G$ it may happen that $V_{G}\equiv0.$
This happens with any vector field $V$ on a rank $1$ compact symmetric space
$M$ if $G$ is the full isometry group of isometries of $M$ (this is
consequence of Lemma \ref{trans}). We prove:

\begin{proposition}
\label{css1} Let $M$ be a compact $n-$dimensional Riemannian manifold and $G$
a compact Lie subgroup of $\operatorname*{Iso}\left(  M\right)  .$ Then the
eigenvalues and eigenvectors of $F$ restrict to the subspace of $G-$ invariant
vector fields of $C^{\infty}(TM)$ are also eigenvalues and eigenvectors of $F$
on $C^{\infty}(TM).$
\end{proposition}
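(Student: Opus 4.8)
The plan is to characterize the critical points variationally and then exploit two compatible structural features of the $G$-symmetrization: that it is an $L^2$-orthogonal projection onto the $G$-invariant fields, and that the rough Laplacian preserves this subspace. Write $\mathcal{I}_G\subseteq C^\infty(TM)$ for the space of $G$-invariant vector fields. Since $M$ is compact, the divergence theorem (no boundary term) gives for the first variation
\[
\left.\tfrac{d}{dt}\right|_{t=0}F(X+tY)=2\int_M\langle\nabla X,\nabla Y\rangle\,dM=-2\int_M\langle\operatorname{div}\nabla X,Y\rangle\,dM,
\]
so a field $X$ with $\int_M\|X\|^2\,dM=1$ is a critical point of $F$ restricted to a linear subspace $W$ exactly when there is a Lagrange multiplier $\lambda$ (the critical value) with
\[
\int_M\langle\operatorname{div}\nabla X+\lambda X,\,Y\rangle\,dM=0\qquad\text{for every }Y\in W.
\]
Taking $W=C^\infty(TM)$ recovers the unconstrained eigenvalue equation $\operatorname{div}\nabla X=-\lambda X$, whereas $W=\mathcal{I}_G$ yields the weak Euler--Lagrange equation for the symmetric problem.

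Next I would record two facts about $\mathcal{I}_G$. First, because each $g\in G$ is an isometry, the operators $V\mapsto (dg)^{-1}\,V\circ g$ are $L^2$-unitary, so their Haar average, which is precisely the $G$-mean $V\mapsto V_G$, is the orthogonal projection of $L^2(TM)$ onto its fixed subspace $\mathcal{I}_G$; in particular $V\in\mathcal{I}_G$ iff $V_G=V$, and $\mathcal{I}_G^\perp=\ker(V\mapsto V_G)$. Second, the commutation identity $(\operatorname{div}\nabla V)_G=\operatorname{div}\nabla V_G$ already established shows that $\operatorname{div}\nabla$ maps $\mathcal{I}_G$ into itself: if $V\in\mathcal{I}_G$ then $\operatorname{div}\nabla V=\operatorname{div}\nabla V_G=(\operatorname{div}\nabla V)_G$, which is $G$-invariant.

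The crux then closes in one line. Let $X\in\mathcal{I}_G$ be an eigenvector of $F|_{\mathcal{I}_G}$ with eigenvalue $\lambda$. By the invariance just proved, $\operatorname{div}\nabla X\in\mathcal{I}_G$, hence $\operatorname{div}\nabla X+\lambda X\in\mathcal{I}_G$. But the constrained Euler--Lagrange equation asserts that this very field is $L^2$-orthogonal to every element of $\mathcal{I}_G$; choosing $Y=\operatorname{div}\nabla X+\lambda X$ forces $\|\operatorname{div}\nabla X+\lambda X\|_{L^2}=0$, so $\operatorname{div}\nabla X=-\lambda X$ on all of $M$. Thus $X$ solves the unconstrained eigenvalue equation, i.e.\ $\lambda$ and $X$ are an eigenvalue and eigenvector of $F$ on $C^\infty(TM)$, which is the assertion.

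I expect the genuine content of the argument to be exactly this orthogonality trick, so the main obstacle is not any computation but ensuring the two hypotheses it rests on hold simultaneously: that $\mathcal{I}_G$ is both $\operatorname{div}\nabla$-invariant and an $L^2$-orthogonal summand. Both are supplied by the symmetrization, the first through the stated commutation relation and the second through the unitarity of the $G$-action. One residual technical point is regularity: a critical point of $F|_{\mathcal{I}_G}$ solves $\operatorname{div}\nabla X=-\lambda X$ only weakly a priori, but the elliptic regularity of the rough Laplacian already invoked in the introduction upgrades $X$ to a $C^\infty$ field, so no loss occurs. Notably, beyond compactness of $M$ and $G$ nothing about the geometry of $M$ or the structure of $G$ enters, which is why the result is stated at this level of generality.
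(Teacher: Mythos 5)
Your proof is correct and rests on the same mechanism as the paper's: the $L^{2}$-orthogonality of zero-$G$-mean fields to $G$-invariant fields (the paper's Lemma \ref{product}, equivalently your statement that the $G$-mean is the orthogonal projection onto $\mathcal{I}_{G}$), combined with the fact that $\operatorname{div}\nabla$ preserves $G$-invariance. The only difference is in packaging: the paper verifies the weak eigenvalue identity against an arbitrary test field $V$ by splitting it as $(V-V_{G})+V_{G}$, whereas you show the residual $\operatorname{div}\nabla X+\lambda X$ lies in $\mathcal{I}_{G}\cap\mathcal{I}_{G}^{\perp}=\{0\}$ and so obtain the pointwise equation directly.
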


We need the following result:

\begin{lemma}
\label{product}On the hypothesis of the Proposition \ref{css1} assume that
$W\in C^{\infty}(TM)$ satisfies $W_{G}\equiv0$. Let $V$ be a $G-$invariant
vector field$.$ Then
\[
\int_{M}\left\langle W(x),V(x)\right\rangle dx=0.
\]

\end{lemma}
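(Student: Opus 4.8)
The plan is to evaluate $\int_{M}\langle W(x),V(x)\rangle\,dx$ by averaging it over the group $G$ and then recognizing the $G$-mean $W_{G}$ inside the resulting expression. Since the left-hand integral is a fixed number, averaging it over $G$ leaves it unchanged, and this is precisely what manufactures the $G$-symmetrization.

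First I would fix $g\in G$ and perform the change of variables $x=g(p)$. Because $g$ is an isometry of $M$, it preserves the Riemannian volume element, so
\[
\int_{M}\langle W(x),V(x)\rangle\,dx=\int_{M}\langle W(g(p)),V(g(p))\rangle\,dp.
\]
Next I would exploit the $G$-invariance of $V$, namely $V(g(p))=dg_{p}(V(p))$, together with the fact that $dg_{p}\colon T_{p}M\to T_{g(p)}M$ is a linear isometry (hence its inverse is its adjoint). This yields
\[
\langle W(g(p)),V(g(p))\rangle=\langle W(g(p)),dg_{p}(V(p))\rangle=\langle (dg_{p})^{-1}W(g(p)),V(p)\rangle,
\]
and therefore, for every $g\in G$,
\[
\int_{M}\langle W(x),V(x)\rangle\,dx=\int_{M}\langle (dg_{p})^{-1}W(g(p)),V(p)\rangle\,dp.
\]

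Since the left-hand side is independent of $g$, I would then integrate both sides against the normalized Haar measure $\tfrac{1}{\operatorname*{Vol}(G)}\,dg$ on $G$ and apply Fubini's theorem, which is legitimate because $G$ and $M$ are compact and the integrand is smooth, to interchange the two integrations:
\[
\int_{M}\langle W(x),V(x)\rangle\,dx=\int_{M}\left(\frac{1}{\operatorname*{Vol}(G)}\int_{G}\langle (dg_{p})^{-1}W(g(p)),V(p)\rangle\,dg\right)dp.
\]
By the very definition of the $G$-symmetrization, the inner integral equals $\langle W_{G}(p),V(p)\rangle$, which vanishes identically by hypothesis, so the whole integral is zero.

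The argument is essentially a Fubini-type reindexing, and I expect no serious obstacle. The only points requiring care are the passage that rewrites $\langle W(g(p)),dg_{p}(V(p))\rangle$ as $\langle (dg_{p})^{-1}W(g(p)),V(p)\rangle$, which relies on $dg_{p}$ being orthogonal, and the change of variables $x=g(p)$, which relies on isometries being volume-preserving; both are routine. The real content of the lemma is the clean identification of the averaged integrand with $\langle W_{G},V\rangle$, after which the hypothesis $W_{G}\equiv 0$ finishes the proof.
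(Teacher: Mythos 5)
Your proposal is correct and follows essentially the same route as the paper: a change of variables by the isometry $g$, the identity $\langle W(g(p)),dg_{p}V(p)\rangle=\langle dg_{p}^{-1}W(g(p)),V(p)\rangle$ (the paper phrases this as applying $dg_{p}^{-1}$ to both arguments and then invoking $G$-invariance of $V$), and an averaging over $G$ via Fubini to recognize $W_{G}$ in the integrand. No discrepancies worth noting.
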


\begin{proof}
Since the elements of $G$ are isometries and $V$ is $G-$invariant we have, for
all $g\in G$
\begin{align*}
\int_{M}\left\langle W\left(  x\right)  ,V\left(  x\right)  \right\rangle dx
&  =\int_{M}\left\langle W\left(  g(x)\right)  ,V\left(  g(x)\right)
\right\rangle dx\\
&  =\int_{M}\left\langle dg_{x}^{-1}\left(  W\left(  g(x)\right)  \right)
,dg_{x}^{-1}\left(  V\left(  g(x)\right)  \right)  \right\rangle dx\\
&  =\int_{M}\left\langle dg_{x}^{-1}\left(  W\left(  g(x)\right)  \right)
,V(x)\right\rangle dx.
\end{align*}
It follows from Fubini's theorem that%
\begin{align*}
\int_{M}\left\langle W\left(  x\right)  ,V\left(  x\right)  \right\rangle dx
&  =\frac{1}{\operatorname{Vol}\left(  G\right)  }\int_{G}\int_{M}\left\langle
dg_{x}^{-1}\left(  W\left(  g(x)\right)  \right)  ,V(x)\right\rangle dxdg\\
&  =\int_{M}\left\langle \frac{1}{\operatorname{Vol}\left(  G\right)  }%
\int_{G}dg_{x}^{-1}\left(  W\left(  g(x)\right)  \right)  dg,V(x)\right\rangle
dx\\
&  =\int_{M}\left\langle W_{G}(x),V(x)\right\rangle dx=0.
\end{align*}
proving the lemma.
\end{proof}

\begin{proof}
[Proof of the Proposition \ref{css1}]Let $X$ an eingenvector of
$\operatorname{div}\nabla$ on the space of the $G$-invariant vector fields
associated to the eigenvalue $\lambda.$ Then%
\[
\int_{M}\left\langle -\operatorname{div}\nabla X,W\right\rangle dx=\lambda
\int_{M}\left\langle X,W\right\rangle dx,
\]
for all $G$-invariant vector field $W$. For proving that
\[
\int_{M}\left\langle -\operatorname{div}\nabla X,V\right\rangle dx=\lambda
\int_{M}\left\langle X,V\right\rangle dx,
\]
holds for any given vector field $V\in C^{\infty}(TM)$ we write $V=Z+V_{G}$
with $Z=V-V_{G}$ so that%
\[
\int_{M}\left\langle -\operatorname{div}\nabla X,V\right\rangle dx=\int
_{M}\left\langle -\operatorname{div}\nabla X,Z\right\rangle dx+\int
_{M}\left\langle -\operatorname{div}\nabla X,V_{G}\right\rangle dx.
\]

Noting that $Z$ has zero $G-$mean and and that $\operatorname{div}\nabla X$ is
a $G-$invariant because $X$ is we have, by Lemma \ref{product}, that the first
term of the hand side of the equality above is zero and then%
\begin{align*}
\int_{M}\left\langle -\operatorname{div}\nabla X,V\right\rangle dx  &
=\int_{M}\left\langle -\operatorname{div}\nabla X,V_{G}\right\rangle
dx=\lambda\int_{M}\left\langle X,V_{G}\right\rangle dx\\
&  =\lambda\int_{M}\left\langle X,V-Z\right\rangle dx=\lambda\int
_{M}\left\langle X,V\right\rangle dx
\end{align*}
proving the proposition.
\end{proof}

\begin{remark}
\label{by}Considering a bi-invariant metric on the unit sphere $\mathbb{S}%
^{3}$ with the Lie group structure, $\mathbb{S}^{3}$ is a Lie subgroup of the
isometry group $O(4)$ of $\mathbb{S}^{3}.$ It follows from Proposition
\ref{css1} that the $\mathbb{S}^{3}-$invariant vector fields are eigenvectors
of the rough Laplacian of $\mathbb{S}^{3}.$ Clearly the $\mathbb{S}^{3}%
-$invariant vector fields are the left (and right) invariant vector fields of
$\mathbb{S}^{3}.$ The orbits of a left invariant vector field constitute a
Hopf fibration of $\mathbb{S}^{3}.$

We have that the energy of a left invariant vector field is $2.$\ Indeed, the
Bochner-Yano formula for a vector field $X\in C^{\infty}(T\mathbb{S}^{3})$,
namely,%
\begin{align*}
\int_{\mathbb{S}^{3}}\left\vert \nabla X\right\vert ^{2}dx  &  =\int
_{\mathbb{S}^{3}}\left(  \operatorname*{Ric}(X,X)+2\left\vert
\operatorname*{Kill}(X)\right\vert ^{2}-\left(  \operatorname{div}X\right)
^{2}\right)  dx\\
&  =\int_{\mathbb{S}^{3}}\left(  2\left\vert X\right\vert ^{2}+2\left\vert
\operatorname*{Kill}(X)\right\vert ^{2}-\left(  \operatorname{div}X\right)
^{2}\right)  dx,
\end{align*}
where $\operatorname*{Kill}(X)$ is the $(0,2)-$tensor
\[
\operatorname*{Kill}(X)(U,V)=\frac{\left\langle \nabla_{U}X,V\right\rangle
+\left\langle \nabla_{V}X,U\right\rangle }{2},\text{ }U,V\in C^{\infty
}(T\mathbb{S}^{3})
\]
(see \cite{Y}), when $X$ is a Killing field, since $\operatorname*{Kill}%
(X)\equiv0$ and $\operatorname{div}X\equiv0,$ gives%
\[
F(X)=\frac{\int_{\mathbb{S}^{3}}\left\vert \nabla X\right\vert ^{2}dx}%
{\int_{\mathbb{S}^{3}}\left\vert X\right\vert ^{2}dx}=2.
\]

As we will see in the next section, the infimum of $F$ on $\mathbb{S}^{n}$ is
$1,$ for all $n.$ Thus, the $\mathbb{S}^{3}-$invariant vector fields do not
realize the infimum of energy on $\mathbb{S}^{3}.$
\end{remark}

\section{\label{S}The infimum of the energy on a sphere}

\qquad In this section we determine the infimum of $F$ in the case that $M$ is
a sphere. Denoting by $\mathbb{S}^{n}\left(  1/k\right)  $ the sphere of
radius $1/k$ in $\mathbb{R}^{n+1}$ we prove:

\begin{theorem}
\label{inf}The infimum of $F$ on $C^{\infty}(T\mathbb{S}^{n}\left(
1/k\right)  ),$ $n\geq2,$ is $k^{2}$, and is assumed by the orthogonal
projection on $T\mathbb{S}^{n}\left(  1/k\right)  $ of a constant nonzero
vector field of $\mathbb{R}^{n+1}$. Moreover, a Killing field of
$\mathbb{S}^{n}\left(  1/k\right)  $ realizes the infimum $k^{2}$ of energy if
and only if $n=2.$
\end{theorem}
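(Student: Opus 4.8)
The plan is to exhibit a field achieving the value $k^{2}$ and then show $k^{2}$ is a lower bound for $F$ by splitting an arbitrary field into a gradient part and a divergence-free part. Write $\mathbb{S}^{n}(1/k)\subset\mathbb{R}^{n+1}$ with outward unit normal $N(p)=kp$, fix $v\in\mathbb{R}^{n+1}\setminus\{0\}$, set $f(p)=\langle v,p\rangle$, and let $V=\operatorname{grad}f$ be the tangential projection $V(p)=v-k^{2}\langle v,p\rangle p$. Differentiating in the ambient space and projecting onto $T\mathbb{S}^{n}(1/k)$, I would first check the pointwise identity $\nabla_{U}V=-k^{2}f\,U$ for every tangent vector $U$; tracing a further covariant derivative in a geodesic normal frame and using $\sum_{i}(e_{i}f)e_{i}=\operatorname{grad}f=V$ then yields $\operatorname{div}\nabla V=-k^{2}V$, i.e. $-\operatorname{div}\nabla V=k^{2}V$. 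Thus $V$ is an eigenfield of the rough Laplacian and $F(V)=k^{2}$.

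For the lower bound, since $-\operatorname{div}\nabla$ is self-adjoint and elliptic, $\inf F$ is its smallest eigenvalue, so it suffices to prove $\int_{M}\Vert\nabla X\Vert^{2}\,dM\geq k^{2}\int_{M}\Vert X\Vert^{2}\,dM$ for every $X$. I would use the $L^{2}$-orthogonal Helmholtz decomposition $X=\operatorname{grad}f+Y$ with $\operatorname{div}Y=0$. The structural point is that the two energies decouple: by the Weitzenböck identity $-\operatorname{div}\nabla=\Delta_{H}-\operatorname{Ric}$ on $1$-forms together with the Einstein condition $\operatorname{Ric}=(n-1)k^{2}$ on the sphere, the operator $-\operatorname{div}\nabla$ carries gradients to gradients (because the Hodge Laplacian $\Delta_{H}$ commutes with $d$). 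Hence $\int_{M}\langle\nabla\operatorname{grad}f,\nabla Y\rangle\,dM=\int_{M}\langle-\operatorname{div}\nabla\operatorname{grad}f,Y\rangle\,dM=0$, and therefore $\int_{M}\Vert\nabla X\Vert^{2}=\int_{M}\Vert\nabla\operatorname{grad}f\Vert^{2}+\int_{M}\Vert\nabla Y\Vert^{2}$.

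It then remains to estimate each piece by $k^{2}$. For the divergence-free part, the Bochner--Yano formula recalled in Remark \ref{by} gives, using $\operatorname{div}Y=0$, $\int_{M}\Vert\nabla Y\Vert^{2}=\int_{M}\operatorname{Ric}(Y,Y)\,dM+2\int_{M}|\operatorname{Kill}(Y)|^{2}\,dM\geq(n-1)k^{2}\int_{M}\Vert Y\Vert^{2}\geq k^{2}\int_{M}\Vert Y\Vert^{2}$ for $n\geq2$. For the gradient part, after normalizing $\int_{M}f=0$, I would integrate the scalar Bochner formula to get $\int_{M}\Vert\nabla\operatorname{grad}f\Vert^{2}=\int_{M}|\operatorname{Hess}f|^{2}=\int_{M}(\Delta f)^{2}-(n-1)k^{2}\int_{M}|\operatorname{grad}f|^{2}$, and then invoke the classical fact that the first nonzero eigenvalue of the scalar Laplacian on $\mathbb{S}^{n}(1/k)$ is $nk^{2}$, whence $\int_{M}(\Delta f)^{2}\geq nk^{2}\int_{M}|\operatorname{grad}f|^{2}$; combining gives $\int_{M}\Vert\nabla\operatorname{grad}f\Vert^{2}\geq k^{2}\int_{M}|\operatorname{grad}f|^{2}$. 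Adding the two estimates yields $F(X)\geq k^{2}$, and equality forces $Y\equiv0$ and $f$ a first eigenfunction, i.e. a linear function restricted to the sphere, so the minimizer is exactly the projected constant field $V$.

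For the Killing statement, a Killing field $X$ satisfies $\operatorname{Kill}(X)\equiv0$ and $\operatorname{div}X\equiv0$, so the Bochner--Yano formula collapses to $\int_{M}\Vert\nabla X\Vert^{2}=\int_{M}\operatorname{Ric}(X,X)\,dM=(n-1)k^{2}\int_{M}\Vert X\Vert^{2}$, i.e. $F(X)=(n-1)k^{2}$; this equals the infimum $k^{2}$ precisely when $n=2$. I expect the main obstacle to be the decoupling step, namely verifying that $-\operatorname{div}\nabla$ respects the Helmholtz splitting so that the energies add, which is exactly where the Einstein property of the sphere (through Weitzenböck) is essential; the remaining inputs are the sharp scalar estimate $\mu_{1}=nk^{2}$ and the sign of the $\operatorname{div}$ term in Bochner--Yano.
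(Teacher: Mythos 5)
Your proposal is correct in all the parts the theorem actually requires, but it takes a genuinely different route from the paper. Both proofs begin identically, by checking that the tangential projection $V$ of a constant vector satisfies $-\operatorname{div}\nabla V=k^{2}V$, and both end the Killing-field clause the same way via Bochner--Yano. The lower bound is where you diverge: you use the $L^{2}$-orthogonal Helmholtz splitting $X=\operatorname{grad}f+Y$, observe that the rough Laplacian preserves this splitting because $\mathbb{S}^{n}(1/k)$ is Einstein (Weitzenb\"ock plus $\Delta_{H}d=d\Delta_{H}$), and then estimate the two pieces separately --- Bochner--Yano for the divergence-free part, the integrated scalar Bochner identity together with $\lambda_{1}=nk^{2}$ for the gradient part. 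The paper instead runs everything through its $G$-symmetrization machinery: a minimizer $W$ either has zero $G$-mean, in which case Lemma \ref{laplafunc} (which is essentially your gradient-part computation done componentwise in ambient coordinates) gives $F(W)\geq(n-1)k^{2}$, or $W_{G}\neq0$ is again a minimizer, is purely radial by Lemma \ref{ortog}, hence is the gradient of a radial function, and Reilly's formula with $(\Delta f)^{2}\leq n\vert\operatorname{Hess}f\vert^{2}$ closes the estimate. Your argument is cleaner and more self-contained for the sphere; the paper's is structured so that the radial-gradient/Reilly step recycles verbatim in Theorem \ref{ricci}, where no Einstein condition is available and your decoupling would fail. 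One small correction: your parenthetical equality analysis (``equality forces $Y\equiv0$, so the minimizer is exactly the projected constant field'') is wrong for $n=2$, since a Killing field on $\mathbb{S}^{2}$ is divergence-free, has $\operatorname{Kill}(Y)=0$, and realizes $F=k^{2}$ --- exactly as the last clause of the theorem asserts. The theorem only claims the infimum \emph{is assumed by} $V$, not that $V$ is the unique minimizer, so this does not damage your proof, but the uniqueness claim should be deleted.
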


We need some preliminary lemmas.

\begin{lemma}
\label{zero}Let $X$ be a vector field of $\mathbb{S}^{n}\left(  1/k\right)  $
with zero $G-$mean ($X_{G}\equiv0)$, where $G$ is the isotropy subgroup of
$\operatorname*{Iso}(\mathbb{S}^{n}\left(  1/k\right)  )$ that leaves fixed a
point $v\in$ $\mathbb{S}^{n}\left(  1/k\right)  .$ Then the function
$f(p):=\left\langle X(p),v\right\rangle ,$ $p\in\mathbb{S}^{n}\left(
1/k\right)  ,$ has zero mean in $\mathbb{S}^{n}\left(  1/k\right)  $ that is,
\[
\int_{\mathbb{S}^{n}\left(  1/k\right)  }f(x)dx=0.
\]

\end{lemma}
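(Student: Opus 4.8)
The plan is to exhibit a convenient $G$-invariant vector field $V$ whose pointwise inner product with $X$ reproduces $f$, and then to invoke Lemma \ref{product}. Concretely, let $V$ be the orthogonal projection onto $T\mathbb{S}^{n}(1/k)$ of the constant vector field of $\mathbb{R}^{n+1}$ equal to $v$; since the outward unit normal at $p\in\mathbb{S}^{n}(1/k)$ is $kp$, this is $V(p)=v-k^{2}\langle v,p\rangle p$. Because $X(p)$ is tangent to the sphere, its inner product with the ambient vector $v$ equals its inner product with the tangential part $V(p)$, so $f(p)=\langle X(p),v\rangle=\langle X(p),V(p)\rangle$ for every $p$.

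The key step is to check that $V$ is $G$-invariant. Every $g\in G$ is the restriction to the sphere of an orthogonal linear map of $\mathbb{R}^{n+1}$ that fixes the point $v$, and hence fixes $v$ as a vector; moreover, since $g$ is linear, its differential $dg_{p}$ coincides with $g$ itself. Using $g(v)=v$ and $\langle v,g(p)\rangle=\langle g^{-1}(v),p\rangle=\langle v,p\rangle$, a direct computation gives $dg_{p}(V(p))=g(v)-k^{2}\langle v,p\rangle g(p)=v-k^{2}\langle v,g(p)\rangle g(p)=V(g(p))$, which is precisely the $G$-invariance of $V$.

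With these two observations in hand the conclusion is immediate: applying Lemma \ref{product} with $W=X$ (which has zero $G$-mean by hypothesis) and the $G$-invariant field $V$ yields $\int_{\mathbb{S}^{n}(1/k)}\langle X(x),V(x)\rangle dx=0$, and since the integrand equals $f(x)$ this is exactly $\int_{\mathbb{S}^{n}(1/k)}f(x)dx=0$.

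I expect the only point genuinely requiring care to be the identification of $dg_{p}$ with the linear map $g$ together with the fact that fixing $v$ as a point of the sphere forces $g$ to fix $v$ as a vector; once this is clear, the $G$-invariance of $V$ and the whole lemma follow mechanically. A minor alternative, avoiding Lemma \ref{product} entirely, would be to integrate the defining identity $\langle X_{G}(p),u\rangle=0$ against a suitable test field, but routing through the already-proved Lemma \ref{product} is cleaner.
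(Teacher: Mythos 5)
Your proof is correct, and it takes a genuinely different route from the paper's. The paper proves the lemma by slicing the integral of $f$ over the $G$-orbits (geodesic spheres centered at $v$) via the coarea formula, pulling the orbit integrals back to $G/H$ and then to $G$, and showing each one vanishes because $\left\langle X_{G}(p),v\right\rangle =0$; the key input there is the same observation you isolate, namely that each $g\in G$ is a linear orthogonal map fixing $v$ as an ambient vector, so $dg_{p}^{-1}v=v$. You instead realize $f(p)=\left\langle X(p),V(p)\right\rangle $ for the concrete $G$-invariant field $V(p)=v-k^{2}\left\langle v,p\right\rangle p$ (the tangential projection of the constant field $v$, which is exactly the minimizer appearing later in Theorem \ref{inf}) and then invoke Lemma \ref{product}. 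Your verification of the $G$-invariance of $V$ and of the identity $\left\langle X(p),v\right\rangle =\left\langle X(p),V(p)\right\rangle $ is correct, and there is no circularity since Lemma \ref{product} is established before this point and independently of it. What your approach buys is brevity and conceptual clarity: the lemma becomes a transparent instance of the $L^{2}$-orthogonality between zero-$G$-mean fields and $G$-invariant fields. What the paper's approach buys is self-containedness at the level of the orbit geometry (and the coarea computation over $G/H$ is reused in the same spirit in Lemma \ref{ortog}), but as a proof of this particular statement yours is cleaner.
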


\begin{proof}
Using the formula of coarea to integrate $f$ on $\mathbb{S}^{n}\left(
1/k\right)  $ along the level sets of $h:\mathbb{S}^{n}\left(  1/k\right)
\rightarrow\mathbb{R},$ $h(p)=d(p,v),$ where $d$ is the distance in
$\mathbb{S}^{n}\left(  1/k\right)  ,$ since $\left\Vert \operatorname{grad}%
h\right\Vert =1,$ we have
\begin{equation}%
{\displaystyle\int_{\mathbb{S}^{n}\left(  1/k\right)  }}
f(x)=\int_{0}^{\pi}\left(  \int_{h^{-1}(t)}f(x)\right)  dt. \label{int}%
\end{equation}

We note that $h^{-1}(t)$ is a geodesic sphere $\mathbb{S}_{t}^{n-1}$ of
$\mathbb{S}^{n}\left(  1/k\right)  $ centered at $v$ and with radius $t$ and
that these spheres are the orbits of $G.$ Given $t\in\left(  0,\pi/k\right)
,$ choose $p\in h^{-1}(t),$ and denote by $H$ the subgroup of isotropy of $G$
at $p$. Let
\[
\psi:\frac{G}{H}\rightarrow\mathbb{S}_{t}^{n-1}%
\]
be given by $\psi(gH)=g(p).$ We note that, up to a factor multiplying the
metric of $G,$ $\psi$ is an isometry. Setting
\[
\widetilde{f}:=f\circ\psi:\frac{G}{H}\rightarrow\mathbb{R}%
\]
we then obtain
\begin{equation}
\int_{h^{-1}(t)}f(x)dx=\int_{\frac{G}{H}}\widetilde{f}(gH)d(gH). \label{til}%
\end{equation}

Let
\[
\phi:G\rightarrow\frac{G}{H},
\]
be the projection of $G$ over $G/H,$ and set $\overline{f}:=\widetilde{f}%
\circ\phi.$ Using the coarea formula to integrate $\overline{f}$ on $G$ along
the fibers of $\phi,$ we obtain
\begin{align*}
\int_{g\in G}\overline{f}(g)dg  &  =\int_{\frac{G}{H}}\left(  \int_{gH}%
\frac{1}{||\operatorname*{Jac}\phi||}\overline{f}(gh)d(gh)\right)  d(gH)\\
&  =\int_{\frac{G}{H}}\left(  \int_{gH}\overline{f}(gh)d(gh)\right)  d(gH).
\end{align*}

Since $\overline{f}$ is constant on $gH,$ $g\in G,$ we get
\[
\int_{G}\overline{f}(g)dg=\operatorname*{Vol}(H)\int_{\frac{G}{H}}%
\widetilde{f}(gH)d(gH)
\]
or by (\ref{til})%
\begin{equation}
\int_{h^{-1}(t)}f(x)dx=\frac{1}{\operatorname*{Vol}(H)}\int_{G}\overline
{f}(g)dg. \label{ff}%
\end{equation}

But
\begin{align*}
\int_{G}\overline{f}(g)dg  &  =\int_{G}\widetilde{f}\left(  gH\right)  dg\\
&  =\int_{G}\left(  f\circ\psi\right)  \left(  gH\right)  dg=\int_{G}f\left(
g(p)\right)  dg\\
&  =\int_{G}\left\langle X(g(p)),v\right\rangle dg=\int_{G}\left\langle
dg_{p}^{-1}X(g(p)),dg_{p}^{-1}v\right\rangle dg\\
&  =\int_{G}\left\langle dg_{p}^{-1}X(g(p)),v\right\rangle dg=\left\langle
X_{G}(p),v\right\rangle =0.
\end{align*}
Therefore from (\ref{ff})
\[
\int_{h^{-1}(t)}f=0,
\]
for any $t\in\left(  0,\pi/k\right)  ,$ and this, with (\ref{int}), proves the lemma.
\end{proof}

\begin{lemma}
\label{laplafunc}Let $V\in C^{\infty}(T\mathbb{S}^{n}\left(  1/k\right)  )$,
$n\geq2,$ with%
\[
\int_{\mathbb{S}^{n}\left(  1/k\right)  }\left\vert V\right\vert ^{2}=1
\]
be given. Let $G$ be the isotropy subgroup of $\operatorname*{Iso}\left(
\mathbb{S}^{n}\left(  1/k\right)  \right)  $ at some point of $\mathbb{S}%
^{n}\left(  1/k\right)  .$ If $V$ is nonzero and has zero $G-$mean, then
$F(V)\geq\left(  n-1\right)  k^{2}.$
\end{lemma}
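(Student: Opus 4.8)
The plan is to base the estimate on the Bochner--Yano identity already recorded in Remark \ref{by} and then to use the zero $G$-mean hypothesis, through Lemma \ref{zero}, to discard the only part of the spectrum that lies below $(n-1)k^2$. Since $\mathbb{S}^n(1/k)$ is Einstein with $\operatorname{Ric}=(n-1)k^2$, that identity reads
\[
\int_{\mathbb{S}^n(1/k)}|\nabla V|^2=(n-1)k^2\int_{\mathbb{S}^n(1/k)}|V|^2+\int_{\mathbb{S}^n(1/k)}\left(2|\operatorname{Kill}(V)|^2-(\operatorname{div}V)^2\right),
\]
so $F(V)\ge(n-1)k^2$ is equivalent to $\int(2|\operatorname{Kill}(V)|^2-(\operatorname{div}V)^2)\ge 0$. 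Writing $\operatorname{Kill}(V)$ as its trace-free part plus $\tfrac1n(\operatorname{div}V)g$ turns this into $2\int|\operatorname{Kill}_0(V)|^2\ge\tfrac{n-2}{n}\int(\operatorname{div}V)^2$, an inequality that is violated exactly by the tangential projections $w^{\top}=\nabla\langle w,\cdot\rangle$ of constant vectors of $\mathbb{R}^{n+1}$, for which $\operatorname{Kill}_0\equiv 0$. Thus the whole content of the hypothesis will be to remove these fields.

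To make the mechanism transparent I would pass to the $L^2$-orthogonal Hodge decomposition $V=\nabla f+W$ with $\operatorname{div}W=0$. Because the manifold is Einstein, $\nabla^{*}\nabla=\Delta_{\mathrm{Hodge}}-(n-1)k^2$ commutes with $d$ and $\delta$, so the rough Laplacian preserves the splitting and the energy decomposes as $\int|\nabla V|^2=\int|\operatorname{Hess}f|^2+\int|\nabla W|^2$, while $\int|V|^2=\int|\nabla f|^2+\int|W|^2$. It then suffices to bound each Rayleigh quotient below by $(n-1)k^2$. For the divergence-free field $W$ this is unconditional: the smallest eigenvalue of the rough Laplacian on coexact fields of $\mathbb{S}^n(1/k)$ is $(n-1)k^2$, attained by the Killing fields (consistent with the value $F=(n-1)k^2$ found in Remark \ref{by}), so expanding $W$ in coexact eigenfields gives $\int|\nabla W|^2\ge(n-1)k^2\int|W|^2$.

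For the gradient part I would expand $f$ in Laplace eigenfunctions, with eigenvalues $\ell(\ell+n-1)k^2$, and use $\int|\operatorname{Hess}f|^2=\int(\Delta f)^2-(n-1)k^2\int|\nabla f|^2$ to get
\[
\frac{\int|\operatorname{Hess}f|^2}{\int|\nabla f|^2}=\frac{\int(\Delta f)^2}{\int|\nabla f|^2}-(n-1)k^2\ge\Big(\min_{\ell\ge1,\,a_\ell\neq0}\ell(\ell+n-1)k^2\Big)-(n-1)k^2.
\]
For every $\ell\ge2$ the bracket is at least $2(n+1)k^2$, giving the lower bound $(n+3)k^2\ge(n-1)k^2$. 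The one dangerous term is $\ell=1$: the first-eigenfunction component contributes only $k^2<(n-1)k^2$ and corresponds precisely to a tangential projection of a constant vector. Here the hypothesis enters: Lemma \ref{zero} shows that zero $G$-mean forces $\int_{\mathbb{S}^n(1/k)}\langle V,v\rangle=0$, i.e. $V$ is $L^2$-orthogonal to the first-eigenfunction gradient $v^{\top}$; applying the same averaging argument across the directions singled out by the $O(n)$-action on $T_v\mathbb{S}^n(1/k)$ is what must eliminate the entire $\ell=1$ component of $\nabla f$, after which the gradient Rayleigh quotient is bounded below by $(n+3)k^2\ge(n-1)k^2$.

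The main obstacle is exactly this last step: controlling the $\ell=1$ component of the gradient part, since it is the unique piece of the rough-Laplacian spectrum lying below $(n-1)k^2$ and is the source of the gap between the global infimum $k^2$ (Theorem \ref{inf}) and the bound asserted here. Everything else follows mechanically from the Einstein condition and the Bochner--Yano identity; the delicate point is to verify that the zero $G$-mean hypothesis, through Lemma \ref{zero}, genuinely rules out \emph{all} the projections of constant vectors and not merely the one in the fixed direction $v$, and this is where I expect the argument to require the most care.
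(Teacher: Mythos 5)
Your route (Bochner--Yano identity plus the Hodge splitting $V=\nabla f+W$) is genuinely different from the paper's, which works extrinsically: writing $V=\sum_{l=1}^{n+1}a_{l}e_{l}$ in an orthonormal basis of $\mathbb{R}^{n+1}$, the paper computes $F(V)=-k^{2}+\sum_{l}\int\left\vert \operatorname{grad}a_{l}\right\vert ^{2}$, asserts that the zero $G$-mean hypothesis forces \emph{all} the coordinate functions $a_{l}$ to have zero mean, and concludes by the Poincar\'{e} inequality with first eigenvalue $nk^{2}$. Both arguments therefore hinge on exactly the step you flagged as delicate: showing that $V_{G}\equiv0$ kills the $\ell=1$ (first spherical harmonic) components in \emph{every} direction of $\mathbb{R}^{n+1}$, and not merely in the direction of the point $v$ fixed by $G$.

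That step cannot be closed, and the case you singled out as dangerous is in fact a counterexample to the stated inequality when $n\geq3$. Take $u\perp v$ and let $V=u^{\top}$ be its tangential projection. For $g$ in the isotropy group $G$ at $v$ one has $(dg_{p})^{-1}V(g(p))=(g^{-1}u)^{\top}_{p}$, and since $G\cong O(n)$ acts transitively on the unit sphere of $v^{\perp}$, Lemma \ref{trans} gives $\int_{G}g^{-1}u\,dg=0$, hence $V_{G}\equiv0$. Yet $-\operatorname{div}\nabla V=k^{2}V$, so $F(V)=k^{2}<(n-1)k^{2}$ for $n\geq3$. The reason the hypothesis does not reach these fields is visible in the proof of Lemma \ref{zero}: it uses $dg_{p}^{-1}v=v$, which holds for no ambient direction other than $v$; correspondingly $\int\langle u^{\top},u\rangle=\tfrac{n}{n+1}\left\vert u\right\vert ^{2}\operatorname{Vol}\neq0$, so the coordinate functions $a_{l}$ in directions orthogonal to $v$ need not have zero mean, and the paper's proof breaks at the same point. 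What does survive --- and is all that the proof of Theorem \ref{inf} actually requires --- is the unconditional bound $F(V)\geq k^{2}$, which your decomposition delivers directly, since the only part of the rough-Laplacian spectrum below $(n-1)k^{2}$ is the $(n+1)$-dimensional $\ell=1$ gradient space with Rayleigh quotient $k^{2}$ (your unproved assertions about the coexact spectrum and about $\int\left\vert \operatorname{Hess}f\right\vert ^{2}$ are correct but would need justification). So your analysis is sound as far as it goes, but the missing step is not a technicality that more care can supply: as stated, the conclusion $F(V)\geq(n-1)k^{2}$ fails for $n\geq3$.
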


\begin{proof}
Denote by $\nabla$ the Riemannian connection of $\mathbb{S}^{n}\left(
1/k\right)  .$ Write
\[
V=\overset{n+1}{\underset{l=1}{\sum}}a_{l}e_{l},
\]
where $\{e_{l}\}$ is an orthonormal basis of the $\mathbb{R}^{n+1}$. Fix
$p\in\mathbb{S}^{n}\left(  1/k\right)  \ $and let $\{E_{j}\}$ be an
orthonormal frame of $\mathbb{S}^{n}\left(  1/k\right)  $ on a neighborhood of
$p.$ Then, for each $i$ we have
\[
\nabla_{E_{i}}V=\overset{n+1}{\underset{l=1}{\sum}}E_{i}(a_{l})e_{l}%
-\left\langle V,E_{i}\right\rangle kp
\]
and thus%
\begin{align*}
\left\langle \nabla V,\nabla V\right\rangle  &  =%
{\displaystyle\sum\limits_{i=1}^{n}}
\left\langle \nabla_{E_{i}}V,\nabla_{E_{i}}V\right\rangle \\
&  =\sum_{i=1}^{n}\left(  -\left\langle V,E_{i}\right\rangle ^{2}k^{2}%
+\sum_{l=1}^{n+1}\left\langle E_{i}(a_{l})e_{l},E_{i}(a_{l})e_{l}\right\rangle
\right) \\
&  =\overset{n+1}{-k^{2}\left\vert V\right\vert ^{2}+%
{\displaystyle\sum\limits_{l=1}^{n+1}}
}\left\vert \operatorname{grad}(a_{l})\right\vert ^{2}.
\end{align*}
We then have
\[
F(V)=-k^{2}+\sum_{l=1}^{n+1}\int_{\mathbb{S}^{n}\left(  1/k\right)
}\left\vert \operatorname{grad}(a_{l})\right\vert ^{2}.
\]

By Lemma \ref{zero} the functions $a_{l}$ have zero mean on $\mathbb{S}%
^{n}\left(  1/k\right)  .$ Since the first positive eigenvalue of
$\mathbb{S}^{n}\left(  1/k\right)  $ is equal to $nk^{2}$, from Poincar\'{e}
inequality we obtain
\begin{align*}
F(V)  &  \geq-k^{2}+nk^{2}\sum_{l=1}^{n+1}\int_{\mathbb{S}^{n}\left(
1/k\right)  }\left\vert a_{l}\right\vert ^{2}\\
&  =-k^{2}+nk^{2}\int_{\mathbb{S}^{n}\left(  1/k\right)  }\left\vert
V\right\vert ^{2}=\left(  n-1\right)  k^{2}.
\end{align*}

\end{proof}

\begin{lemma}
\label{trans}Assume that a Lie subgroup $H$ of $O(n)=\operatorname*{Iso}%
\left(  \mathbb{S}^{n}\left(  1/k\right)  \right)  $ acts transitively on
$\mathbb{S}^{n}\left(  1/k\right)  .$ Then%
\begin{equation}
\int_{H}\left\langle hu,v\right\rangle dh=0 \label{h}%
\end{equation}
for any fixed vectors $u,v\in\mathbb{S}^{n}\left(  1/k\right)  .$
\end{lemma}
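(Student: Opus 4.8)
The plan is to interpret the integral as an inner product against an averaging operator and then to show that this operator must vanish because $H$ has no nonzero fixed vectors in $\mathbb{R}^{n+1}$ (recall that $u,v$ are genuine vectors of the ambient $\mathbb{R}^{n+1}$, since $\mathbb{S}^{n}(1/k)\subset\mathbb{R}^{n+1}$). First I would define the linear map $L\colon\mathbb{R}^{n+1}\to\mathbb{R}^{n+1}$ by $L(w)=\int_{H}hw\,dh$, which makes sense as an $\mathbb{R}^{n+1}$-valued Haar integral because $H$ is compact and $w\mapsto hw$ is continuous. Each $h\in H$ acts linearly, so the fixed vector $v$ can be pulled out of the integral, giving $\int_{H}\langle hu,v\rangle\,dh=\langle Lu,v\rangle$; the claim thus reduces to showing that $Lu=0$ for every $u$.

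The central step is to show that $L$ intertwines the $H$-action. For any $h_{0}\in H$, the substitution $g=h_{0}h$ together with the left invariance of the Haar measure yields
\[
h_{0}L(w)=\int_{H}h_{0}h\,w\,dh=\int_{H}g\,w\,dg=L(w),
\]
so $h_{0}Lw=Lw$ for all $w\in\mathbb{R}^{n+1}$. Consequently the image of $L$ is contained in the subspace $\mathrm{Fix}(H):=\{w\in\mathbb{R}^{n+1}:h_{0}w=w\text{ for all }h_{0}\in H\}$ of $H$-fixed vectors.

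It then remains to prove $\mathrm{Fix}(H)=\{0\}$, and this is where transitivity is used decisively. If some $w\neq0$ were fixed by all of $H$, then the point $w/(k|w|)\in\mathbb{S}^{n}(1/k)$ would be fixed by every element of $H$; but since $H$ acts transitively and $n\geq1$ the sphere has more than one point, so no point can be stabilized by the whole group, a contradiction. Hence $\mathrm{Fix}(H)=\{0\}$, which forces $L=0$ and therefore $\langle Lu,v\rangle=0$, establishing \eqref{h}.

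I expect no serious obstacle, only a bookkeeping subtlety around the Haar measure: one must confirm that the measure $dh$ in the statement is the invariant one and that it is left invariance that legitimizes the substitution $g=h_{0}h$. Since $H$ is compact it is unimodular, so left and right invariance coincide and the (un)normalization of $dh$ is irrelevant to the vanishing. The transitivity argument in the last step is the conceptual heart, while everything else is a routine application of invariance of the integral.
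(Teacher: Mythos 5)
Your proof is correct, but it takes a genuinely different route from the paper's. The paper's argument is a one-step change of variables: by transitivity there is a single element $g\in H$ with $g(v)=-v$, and pulling the integral back under the left translation $L_{g^{-1}}$ (an orientation-preserving isometry of $H$ for the induced left-invariant metric) gives
\[
\int_{H}\left\langle hu,v\right\rangle dh=\int_{H}\left\langle hu,gv\right\rangle dh=-\int_{H}\left\langle hu,v\right\rangle dh,
\]
forcing the integral to vanish. You instead run the standard averaging argument from representation theory: the operator $L(w)=\int_{H}hw\,dh$ is $H$-equivariant onto the fixed subspace, and transitivity rules out nonzero fixed vectors because a fixed vector would give a fixed point on the sphere, whose orbit would then be a singleton. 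Both proofs rest on the same invariance of the Haar (Riemannian volume) measure under left translation, and each actually uses strictly less than full transitivity: the paper only needs $-v$ to lie in the orbit of $v$, while you only need that no point of the sphere is fixed by all of $H$. Your version is the more conceptual and reusable one (it is exactly the ``no invariant vectors implies zero average'' principle), whereas the paper's is shorter and entirely elementary; your closing remarks about unimodularity and normalization of $dh$ are correct and dispose of the only bookkeeping issue.
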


\begin{proof}
Let $u,v\in\mathbb{S}^{n}\left(  1/k\right)  $ be given. There is $g\in H$
such that $g(v)=-v.$ Since the left translation $L_{g^{-1}}:H\rightarrow H,$
$L_{g^{-1}}(h)=g^{-1}h$ is an orientation preserving isometry of $H$ we have%
\[
\int_{H}\left\langle hu,v\right\rangle dh=\int_{H}\left\langle g^{-1}%
hu,v\right\rangle \left(  L_{g^{-1}}\right)  ^{\ast}dh=\int_{H}\left\langle
hu,gv\right\rangle dh=-\int_{H}\left\langle hu,v\right\rangle dh
\]
which proves (\ref{h}).
\end{proof}

\bigskip

The next lemma will also be used to prove Theorem \ref{ricci}. We make use of
the following terminology: An orbit of highest dimension of a compact Lie
group $G$ acting on a compact manifold $M$ is called a principal orbit (except
to the exceptional orbits, see \cite{HL}). We say that $G$ acts with
cohomogeneity one if the principal orbits of $G$ have codimension $1.$ The
principal orbits of $G$ foliates a open dense subset of $M$ whose
complementary has zero $\dim M-$dimensional measure (\cite{HL}).

\begin{lemma}
\label{ortog}Let $M^{n}$ be a compact, orientable Riemannian manifold. Let $G$
be a compact Lie subgroup of $\operatorname*{Iso}(M)$ acting with
cohomogeneity one on $M$. Assume moreover that the subgroup of isotropy of $G$
at any point of a principal orbit of $G$ acts transitively (by the derivative)
on the spheres centered at origin of the tangent space of the orbit at the
point$.$ Then any $G-$invariant vector field is orthogonal to the principal
orbits of $G.$
\end{lemma}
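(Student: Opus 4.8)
The plan is to argue pointwise. Fix a point $p$ on a principal orbit $\mathcal{O}=G\cdot p$ and let $H=G_p=\{g\in G: g(p)=p\}$ denote the isotropy subgroup of $G$ at $p$. The first observation is that a $G$-invariant vector field $V$ is, in particular, fixed by the isotropy representation at $p$: the defining relation $V(g(p))=dg_p(V(p))$ applied to $g\in H$ (so $g(p)=p$) gives $V(p)=dg_p(V(p))$ for all $g\in H$. Thus $V(p)$ lies in the subspace of $T_pM$ fixed by $\{dg_p: g\in H\}$.

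Next I would split $T_pM$ orthogonally as $T_p\mathcal{O}\oplus N_p$, where $N_p$ is the normal line to the orbit, which is one-dimensional since $G$ has cohomogeneity one. The point is that each $dg_p$, $g\in H$, preserves this decomposition: any $g\in G$ maps the orbit $\mathcal{O}=Gp$ to $gGp=Gp=\mathcal{O}$, so for $g\in H$ the isometry $g$ carries $\mathcal{O}$ into itself and fixes $p$, whence $dg_p(T_p\mathcal{O})=T_p\mathcal{O}$; being a linear isometry of $T_pM$, $dg_p$ then also preserves the orthogonal complement $N_p$. Writing $V(p)=V^{\top}(p)+V^{\perp}(p)$ accordingly, the invariance from the previous step forces $dg_p(V^{\top}(p))=V^{\top}(p)$ for all $g\in H$; that is, the tangential part is a vector of $T_p\mathcal{O}$ fixed by the whole isotropy representation restricted to $T_p\mathcal{O}$.

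The heart of the argument is then to conclude $V^{\top}(p)=0$. Here I would invoke the transitivity hypothesis: by assumption $H$ acts, through the derivative, transitively on the unit sphere of $T_p\mathcal{O}$. Since $\dim\mathcal{O}=n-1\geq1$, this sphere contains at least two points. If $V^{\top}(p)$ were nonzero, normalizing it would produce a unit vector $u\in T_p\mathcal{O}$ fixed by every $dg_p$, $g\in H$; transitivity would then force the orbit of $u$ to equal the whole unit sphere while simultaneously being the singleton $\{u\}$, which is impossible. Hence $V^{\top}(p)=0$ and $V(p)\in N_p$ is orthogonal to $T_p\mathcal{O}$. As $p$ was an arbitrary point of a principal orbit, $V$ is orthogonal to the principal orbits, as claimed.

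The individual steps are essentially routine once the setup is in place; the only point requiring care, which I would regard as the main (if mild) obstacle, is the verification that the isotropy derivative $dg_p$ genuinely preserves $T_p\mathcal{O}$ and hence $N_p$, so that the transitivity assumption on $T_p\mathcal{O}$ can be applied to the tangential component in isolation. This rests on the elementary but essential fact that every element of $G$ permutes the $G$-orbits and therefore maps $\mathcal{O}$ to itself.
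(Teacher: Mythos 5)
Your proof is correct, but it takes a genuinely different route from the paper's. The paper argues by averaging: it writes $\left\langle X(p),v\right\rangle$ as $\frac{1}{\operatorname*{Vol}(G)}\int_{G}\left\langle X(g(p)),dg_{p}v\right\rangle dg$, decomposes this integral via the coarea formula over the fibration $G\rightarrow G/H$, and kills the inner integral over $H$ using Lemma \ref{trans}, which says that $\int_{H}\left\langle hu,v\right\rangle dh=0$ for a group acting transitively on the sphere (proved there by finding $g$ with $g(v)=-v$ and changing variables). You instead argue by pure linear algebra at the point $p$: the isotropy representation of $H=G_{p}$ fixes $V(p)$, it preserves the orthogonal splitting $T_{p}M=T_{p}\mathcal{O}\oplus N_{p}$, and a transitive action on the unit sphere of $T_{p}\mathcal{O}$ (which has at least two points since $\dim\mathcal{O}\geq1$) admits no fixed unit vector, so the tangential component of $V(p)$ must vanish. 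Your argument is shorter and avoids the measure-theoretic machinery (Haar measure, Fubini, coarea) entirely; it also makes transparent that only the isotropy action at the single point $p$ is used, and in fact only the weaker hypothesis that this action has no nonzero fixed vector in $T_{p}\mathcal{O}$. The paper's averaging argument, on the other hand, is consistent in style with the $G$-symmetrization framework running through the whole paper and reuses Lemma \ref{trans}, which serves other purposes there. Both correctly establish the claim; the one point you rightly flag as needing care --- that $dg_{p}$ preserves $T_{p}\mathcal{O}$ and hence $N_{p}$ --- is handled adequately by your observation that $g\in H$ maps the orbit to itself and fixes $p$.
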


\begin{proof}
Let $p\in M$ be such that $G(p)$ is a principal orbit of $G$ and let $v\in
T_{p}G(p)$ be any fixed vector$.$ Since $X$ is $G-$invariant we have
\[
\left\langle X(p),v\right\rangle =\left\langle dg_{p}^{-1}%
(X(g(p))),v\right\rangle =\left\langle X(g(p)),dg_{p}v\right\rangle
\]
for all $g\in G,$ so that
\[
\left\langle X(p),v\right\rangle =\frac{1}{\operatorname*{Vol}(G)}\int
_{G}\left\langle X(g(p)),dg_{p}v\right\rangle dg.
\]
Denoting by $H$ be the isotropy subgroup of $G$ at $p$ we have by coarea
formula%
\[
\int_{G}\left\langle X(g(p)),dg_{p}v\right\rangle dg=\int_{G/H}\left(
\int_{gH}\left\langle X((gh)(p)),d(gh)_{p}v\right\rangle d(gh)\right)  d(gH).
\]
Moreover%
\begin{align*}
\int_{gH}\left\langle X((gh)(p)),d(gh)_{p}v\right\rangle d(gh)  &  =\int
_{H}\left\langle X(g(p)),d\left(  gh\right)  _{p}v\right\rangle dh\\
&  =\int_{H}\left\langle dg_{p}^{-1}X(g(p)),dh_{p}v\right\rangle dh=0
\end{align*}
by the previous lemma, since the action $h\mapsto dh_{p}$ of $H$ on
$T_{p}G(p)$ is transitive on the spheres of $T_{p}G(p)$. Then
\[
\left\langle X(p),v\right\rangle =\frac{1}{\operatorname*{Vol}(G)}\int
_{G}\left\langle X(g(p)),dg_{p}v\right\rangle dg=0.
\]
This implies that $X\left(  p\right)  \in\left(  T_{p}G\left(  p\right)
\right)  ^{\perp}$ since $v$ is arbitrary, concluding with the proof of the lemma.
\end{proof}

\begin{proof}
[Proof of Theorem \ref{inf}]A calculation shows that if $X\in C^{\infty
}(T\mathbb{S}^{n}\left(  1/k\right)  )$ is the orthogonal projection on
$T\mathbb{S}^{n}\left(  1/k\right)  $ of a vector $v\in\mathbb{R}^{n+1}$ then
$-\operatorname*{div}\nabla X=k^{2}X.$ We will prove that $k^{2}$ is the
infimum of $F$ and hence proving the theorem. We may assume that
$v\in\mathbb{S}^{n}\left(  1/k\right)  .$

Let $G$ be the isotropy subgroup of $\operatorname*{Iso}(\mathbb{S}^{n}\left(
1/k\right)  )$ at $v$ and $W\in C^{\infty}(T\mathbb{S}^{n}\left(  1/k\right)
)$ assuming the infimum of $F.$ It follows from Lemma \ref{laplafunc} that if
$W$ has zero $G-$mean, then $F(W)\geq\left(  n-1\right)  k^{2}$ and the
theorem is proved in this case. We may then assume that $W_{G}$ is non zero$.$
Since $\operatorname{div}W_{G}=\left(  \operatorname{div}W\right)  _{G},$ $F$
assumes its infimum at $W_{G}$ too$.$

By the Lemma \ref{ortog}, $W_{G}$ is orthogonal to the orbits of $G,$ which
are geodesic spheres centered at $v.$ We then have $W_{G}=\left\langle
W_{G},\operatorname{grad}s\right\rangle \operatorname{grad}s,$ where $s$ is
the distance in $\mathbb{S}^{n}\left(  1/k\right)  $ to $v.$ Define $h\in
C^{2}\left(  \left[  0,\pi/k\right]  \right)  $ by $h(t)=\left\langle
W_{G},\operatorname{grad}s\right\rangle (x),$ where $x\in\mathbb{S}^{n}\left(
1/k\right)  $ is such that $t=s(x).$ Since $W_{G}$ is $G-$invariant $h$ is
well defined and we have $W_{G}=h(s)\operatorname{grad}s.$

If $\phi\in C^{2}\left(  \left[  0,\pi/k\right]  \right)  $ is a primitive of
$h$ and $f:\mathbb{S}^{n}\left(  1/k\right)  \rightarrow\mathbb{R}$ is defined
by $f(x)=\phi(s(x))$ then we have
\[
\operatorname{grad}f=\phi^{\prime}\operatorname{grad}s=h\operatorname{grad}%
s=W_{G}.
\]

Applying Reilly's formula to $f$ (see \cite{Re}) we obtain%
\begin{align*}
\int_{\mathbb{S}^{n}\left(  1/k\right)  }\left(  \Delta f\right)  ^{2}dx  &
=\int_{\mathbb{S}^{n}\left(  1/k\right)  }\operatorname*{Ric}%
\nolimits_{\mathbb{S}^{n}\left(  1/k\right)  }\left(  \operatorname{grad}%
f,\operatorname{grad}f\right)  dx\\
&  +\int_{\mathbb{S}^{n}\left(  1/k\right)  }\left\vert \operatorname*{Hess}%
\left(  f\right)  \right\vert ^{2}dx\\
&  =\int_{\mathbb{S}^{n}\left(  1/k\right)  }\operatorname*{Ric}%
\nolimits_{\mathbb{S}^{n}\left(  1/k\right)  }\left(  W_{G},W_{G}\right)
dx+\int_{\mathbb{S}^{n}\left(  1/k\right)  }\left\vert \operatorname*{Hess}%
\left(  f\right)  \right\vert ^{2}dx,
\end{align*}
where $\Delta f$ and $\left\vert \operatorname*{Hess}\left(  f\right)
\right\vert $ are the Laplacian and the norm of the Hessian of $f.$ Note that
$\left\vert \operatorname*{Hess}\left(  f\right)  \right\vert =\left\vert
\nabla W_{G}\right\vert $ and then, since $\left(  \Delta f\right)  ^{2}\leq
n\left\vert \operatorname*{Hess}\left(  f\right)  \right\vert ^{2},$ assuming
that $\left\vert W_{G}\right\vert _{L^{2}}=1,$ we obtain%
\begin{align*}
&  \left(  n-1\right)  \int_{\mathbb{S}^{n}\left(  1/k\right)  }\left\vert
\nabla W_{G}\right\vert ^{2}dx\\
&  \geq\int_{\mathbb{S}^{n}\left(  1/k\right)  }\left\vert W_{G}\right\vert
^{2}\operatorname*{Ric}\nolimits_{\mathbb{S}^{n}\left(  1/k\right)  }\left(
\frac{W_{G}}{\left\vert W_{G}\right\vert },\frac{W_{G}}{\left\vert
W_{G}\right\vert }\right)  dx\geq(n-1)k^{2}.
\end{align*}
It follows that $F(W_{G})\geq k^{2},\ $proving the first part of the theorem.
For the last part, it follows from Bochner-Yano formula of Remark \ref{by}
that if $X$ is a Killing field in $\mathbb{S}^{n}\left(  1/k\right)  $ then
$F(X)=(n-1)k^{2}$ so that $F(X)=k^{2}$ if and only if $n=1.$
\end{proof}

\section{The critical points of the energy on a rank 1 compact symmetric
space}

\qquad In this section we prove:

\begin{theorem}
\label{main}Let $M$ be a rank $1$ compact symmetric space. Let $G$ be a
compact Lie subgroup of $\operatorname*{Iso}(M)$ that leaves pointwise fixed a
totally geodesic submanifold of $M$ with dimension bigger than or equal to
$1.$ Then the all the critical points of the energy in $M$ are assumed by a
$G-$invariant vector field.
\end{theorem}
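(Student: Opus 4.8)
The plan is to show that every eigenvalue of the rough Laplacian $-\operatorname{div}\nabla$ admits a $G$-invariant eigenvector; since by \cite{U} the critical points of $F$ are exactly the eigenvectors of $-\operatorname{div}\nabla$, this proves the theorem. Fix an eigenvalue $\lambda$ and let $E_{\lambda}\subset C^{\infty}(TM)$ be its (finite-dimensional) eigenspace. Because the $G$-mean satisfies $(\operatorname{div}\nabla V)_{G}=\operatorname{div}\nabla V_{G}$, the symmetrization $V\mapsto V_{G}$ sends $E_{\lambda}$ into the space of $G$-invariant eigenvectors for $\lambda$. Hence it suffices to produce a single $V\in E_{\lambda}$ with $V_{G}\not\equiv0$, for then $V_{G}$ is the desired $G$-invariant critical point.

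Next I would localize at the fixed submanifold. Let $N$, with $\dim N\geq1$, be the totally geodesic submanifold left pointwise fixed by $G$, and fix $p\in N$. For $g\in G$ one has $g(p)=p$, and since $N$ is totally geodesic and pointwise fixed, $dg_{p}$ restricts to the identity on $T_{p}N$: a geodesic of $M$ issuing from $p$ tangent to $N$ stays in $N$ and is fixed by $g$, so its initial velocity is fixed by $dg_{p}$. Thus $T_{p}N\subseteq(T_{p}M)^{G}$, the subspace fixed by the isotropy representation $g\mapsto dg_{p}$ of $G$. Evaluating the definition of the $G$-mean at $p$, where $g(p)=p$, gives $V_{G}(p)=P_{p}\big(V(p)\big)$, the orthogonal projection of $V(p)$ onto $(T_{p}M)^{G}$ (the average of an isometric action being the Reynolds projection onto its fixed set). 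Therefore, if I can find $V\in E_{\lambda}$ whose value at $p$ is a prescribed nonzero $w\in T_{p}N$, then $V_{G}(p)=P_{p}(w)=w\neq0$, so $V_{G}\not\equiv0$.

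The crux is thus the surjectivity of the evaluation map $\operatorname{ev}_{p}\colon E_{\lambda}\to T_{p}M$, $\operatorname{ev}_{p}(V)=V(p)$. Here I would use that $-\operatorname{div}\nabla$ commutes with every isometry, so the full group $I:=\operatorname{Iso}(M)$ acts on $E_{\lambda}$ by $(g\cdot V)(x)=dg_{g^{-1}(x)}V(g^{-1}(x))$; a direct check shows $\operatorname{ev}_{p}$ is equivariant for the isotropy subgroup $K:=\operatorname{Iso}_{p}(M)$ acting on $T_{p}M$ via $g\mapsto dg_{p}$. Hence $\operatorname{ev}_{p}(E_{\lambda})$ is a $K$-invariant subspace of $T_{p}M$. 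Since $M$ is a rank $1$ compact symmetric space it is two point homogeneous, so $K$ acts transitively on the unit sphere of $T_{p}M$ and $T_{p}M$ is $K$-irreducible (a proper nonzero invariant subspace would contradict transitivity on the sphere). Thus $\operatorname{ev}_{p}(E_{\lambda})$ is $0$ or all of $T_{p}M$; it cannot be $0$, for if every $V\in E_{\lambda}$ vanished at $p$ then, by transitivity of $I$ and $I$-invariance of $E_{\lambda}$, every such $V$ would vanish at every point, forcing $E_{\lambda}=0$. Therefore $\operatorname{ev}_{p}$ is onto.

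Combining the steps, I choose $0\neq w\in T_{p}N$ (possible since $\dim N\geq1$), pick $V\in E_{\lambda}$ with $V(p)=w$, and form $V_{G}$; by the second paragraph $V_{G}(p)=w\neq0$, so $V_{G}$ is a nonzero $G$-invariant eigenvector for $\lambda$. As $\lambda$ was arbitrary, every critical value of $F$ is realized by a $G$-invariant vector field. The step I expect to be the main obstacle is this irreducibility-plus-surjectivity argument: one must justify carefully that the isotropy representation on $T_{p}M$ is irreducible for rank $1$ symmetric spaces, that $E_{\lambda}$ is genuinely $I$-invariant, and that vanishing at $p$ propagates to vanishing everywhere by homogeneity. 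The remaining verifications are routine.
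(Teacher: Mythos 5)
Your proposal is correct, and its overall architecture coincides with the paper's: translate an eigenvector by an isometry so that its value at a point $p$ fixed by $G$ is a nonzero vector fixed by the isotropy representation of $G$, then symmetrize and observe the result is a nonzero $G$-invariant eigenvector for the same eigenvalue. The differences are in how two sub-steps are executed. First, where you prove surjectivity of the evaluation map $\mathrm{ev}_{p}\colon E_{\lambda}\to T_{p}M$ via $K$-equivariance and irreducibility of the isotropy representation of $K=\operatorname{Iso}_{p}(M)$, the paper argues more directly: it picks $q$ with $V(q)\neq 0$ and uses transitivity of $\operatorname{Iso}(M)$ on the unit tangent bundle (two-point homogeneity of rank $1$ symmetric spaces) to produce $h$ with $h(p)=q$ and $dh_{p}^{-1}(V(q))=v\in T_{p}T$; both arguments use exactly the same hypothesis, and yours is no weaker. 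Second, and more notably, to conclude that the symmetrized field is not identically zero the paper invokes a separate limiting lemma (its Lemma \ref{lim}), expressing $V^{h}(p)$ as a limit of normalized orbit averages over points $x_{n}\to p$, whereas you simply evaluate the $G$-mean at the fixed point $p$ itself: since $g(p)=p$ and $dg_{p}$ fixes $w\in T_{p}N$ for all $g\in G$, the average $\frac{1}{\operatorname{Vol}(G)}\int_{G}dg_{p}^{-1}(w)\,dg$ equals $w$ outright. This direct evaluation is cleaner and makes the paper's limit lemma unnecessary (you do not even need the full ``Reynolds projection'' statement, only that fixed vectors are averaged to themselves). Your argument that $dg_{p}$ is the identity on $T_{p}N$ via geodesics tangent to $N$ is the same exponential-map argument as in the paper. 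All the points you flag as needing care (irreducibility of the isotropy representation for rank $1$ spaces, $\operatorname{Iso}(M)$-invariance of $E_{\lambda}$, propagation of vanishing by homogeneity) are standard and hold as you state them.
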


If $V$ is a vector field on $M$ and if $h:M\rightarrow M$ is a diffeomorphism,
we denote by $V^{h}$ the $h-$related vector field to $V,$ that is,
$V^{h}(p)=\left(  dh_{p}\right)  ^{-1}(V(h(p)),$ $p\in M.$ We use the
following lemma.

\begin{lemma}
\label{lim}Let $p\in M$, $v\in T_{p}M$ and $V\in C^{\infty}(TM)$ be given.
Assume that $g(p)=p$ and $dg_{p}(v)=v$ for all $g\in G$. Given $q\in M$ assume
that $h\in\operatorname*{Iso}\left(  M\right)  $ is such that $h(p)=q$ and
$dh_{q}^{-1}(V(q))=v$. Let $x_{n}\in M$ be a sequence converging to $p$. Then%
\[
V^{h}(p)=\lim_{n\rightarrow\infty}\frac{\int_{G}dg_{x_{n}}^{-1}(V^{h}%
(g(x_{n})))}{\operatorname*{Vol}\left(  G(x_{n})\right)  },
\]
where $\operatorname*{Vol}\left(  G(x_{n})\right)  $ is the $k-$dimensional
Hausdorff measure of $G(x_{n}),$ $k=\dim G(x_{n})\geq0.$
\end{lemma}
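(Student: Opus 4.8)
The plan is to read the quotient as an average of a continuous, vector‑valued function over the shrinking orbits $G(x_n)$ and then to exploit that $v$ is fixed by the isotropy representation at $p$. First I would record the two elementary facts the argument rests on: that $V^h\in C^\infty(TM)$, since $V$ is smooth and $h$ is a diffeomorphism, and that the hypotheses force $V^h(p)=(dh_p)^{-1}(V(q))=dh_q^{-1}(V(q))=v$, using $h(p)=q$ and the identification $dh_q^{-1}=(d(h^{-1}))_q=(dh_p)^{-1}$. The substantive first step is to rewrite the numerator by the co-area formula applied to the orbit map $\pi_{x_n}\colon G\to G(x_n)$, $g\mapsto g(x_n)$, so that division by $\operatorname{Vol}(G(x_n))$ turns the expression into an honest average, against a probability measure on $G$, of the $T_{x_n}M$-valued function $g\mapsto dg_{x_n}^{-1}(V^h(g(x_n)))$. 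When $\dim G(x_n)=0$ the orbit is a single point and this average is simply the isotropy mean of $V^h(x_n)$; with the convention that the $0$-dimensional Hausdorff measure is the counting measure, the statement stays uniform across the whole sequence.

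Next I would isolate the analytic core. Consider $\Phi(g,x)=dg_x^{-1}(V^h(g(x)))$, which is jointly continuous on $G\times M$ because the $G$-action, its differential, and $V^h$ are all smooth. At $x=p$, using $g(p)=p$ and $dg_p(v)=v$, one gets
\[
\Phi(g,p)=dg_p^{-1}(V^h(p))=dg_p^{-1}(v)=v\qquad\text{for every }g\in G,
\]
so $\Phi(\cdot,p)$ is the constant map with value $v$. Since $G$ is compact and $\Phi$ is jointly continuous, $\Phi(g,x_n)\to v$ uniformly in $g\in G$ as $x_n\to p$; here I identify the tangent spaces $T_{x_n}M$ with $T_pM$ through the ambient Euclidean space into which $M$ is isometrically embedded, or equivalently by parallel transport along the short geodesics joining $x_n$ to $p$, which is legitimate precisely because $x_n\to p$.

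With these two pieces in place the conclusion is immediate: the quotient is an average of $\Phi(\cdot,x_n)$ against probability measures, and an average of a family converging uniformly to the constant $v$ converges to $v=V^h(p)$. I expect the main obstacle to be the bookkeeping in the first step rather than the limit itself. As $x_n\to p$ the orbit map $\pi_{x_n}$ degenerates, the orbits collapse onto the fixed point $p$ so that $\operatorname{Vol}(G(x_n))\to0$, and the orbit dimension $\dim G(x_n)$ may jump along the sequence; one must verify that the co-area Jacobian factors entering the numerator and those entering $\operatorname{Vol}(G(x_n))$ cancel, so that for every $n$ the quotient really is the claimed probability average, and that the degenerate and $0$-dimensional cases are absorbed by the counting-measure convention. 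Once the quotient is recognized as a genuine average, the uniform convergence $\Phi(\cdot,x_n)\to v$ closes the argument with no further estimates.
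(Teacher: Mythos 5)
Your proposal is correct and follows essentially the same route as the paper: both read the quotient as an average over the compact group $G$ of the continuous function $g\mapsto dg_{x_n}^{-1}(V^{h}(g(x_n)))$, observe that at $x=p$ this function is constantly $v=V^{h}(p)$ because $g(p)=p$ and $dg_p(v)=v$, and conclude by continuity. The only cosmetic difference is that the paper pairs the expression with a test field $Z$ and sandwiches the average between the infimum and supremum of the integrand over $G$ (thereby avoiding any identification of tangent spaces), whereas you identify $T_{x_n}M$ with $T_pM$ and invoke uniform convergence directly; the measure-theoretic bookkeeping you flag (that the quotient is a genuine probability average) is likewise implicit, not elaborated, in the paper's argument.
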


\begin{proof}
We will prove that
\[
\left\langle V^{h}(p),Z(p)\right\rangle =\left\langle \lim_{n\rightarrow
\infty}\frac{\int_{G}dg_{x_{n}}^{-1}(V^{h}(g(x_{n})))}{\operatorname*{Vol}%
(G(x_{n}))},Z(p)\right\rangle \text{, }Z\in C^{\infty}(TM).
\]
Given $Z\in C^{\infty}(TM)$ we have, for a given $n,$%
\[
\left\langle \frac{\int_{G}dg_{x_{n}}^{-1}(V^{h}(g(x_{n})))}%
{\operatorname*{Vol}(G(x_{n}))},Z(x_{n})\right\rangle =\frac{\int
_{G}\left\langle dg_{x_{n}}^{-1}(V^{h}(g(x_{n}))),Z(x_{n})\right\rangle
}{\operatorname*{Vol}(G(x_{n}))}%
\]
so that
\begin{align*}
\underset{g\in G}{\inf}\left\langle dg_{x_{n}}^{-1}(V^{h}(g(x_{n}%
))),Z(x_{n})\right\rangle  &  \leq\frac{\int_{G}\left\langle dg_{x_{n}}%
^{-1}(V^{h}(g(x_{n}))),Z(x_{n})\right\rangle \omega}{\operatorname*{Vol}%
(G(x_{n}))}\\
&  \leq\underset{g\in G}{\sup}\left\langle dg_{x_{n}}^{-1}(V^{h}%
(g(x_{n}))),Z(x_{n})\right\rangle .
\end{align*}

Letting $n\rightarrow\infty$\ we obtain
\begin{align*}
\underset{g\in G}{\inf}\left\langle dg_{p}^{-1}(V^{h}%
(g(p))),Z(p)\right\rangle  &  \leq\left\langle \lim_{n\rightarrow\infty}%
\frac{\int_{G}dg_{x_{n}}^{-1}(V^{h}(g(x_{n}))dg}{\operatorname*{Vol}%
(G(x_{n}))},Z(p)\right\rangle \\
&  \leq\underset{g\in G}{\sup}\left\langle dg_{p}^{-1}(V^{h}%
(g(p))),Z(p)\right\rangle ,
\end{align*}
and, since $g(p)=p,$%
\begin{align*}
\underset{g\in G}{\inf}\left\langle dg_{p}^{-1}(V^{h}(p)),Z(p)\right\rangle
&  \leq\left\langle \lim_{n\rightarrow\infty}\frac{\int_{G}dg_{x_{n}}%
^{-1}(V^{h}(g(x_{n})))dg}{\operatorname*{Vol}(G(x_{n}))},Z(p)\right\rangle \\
&  \leq\underset{g\in G}{\sup}\left\langle dg_{p}^{-1}(V^{h}%
(p)),Z(p)\right\rangle .
\end{align*}
Since
\[
dg_{p}^{-1}(V^{h}(p))=dg_{p}^{-1}(v)=v=V^{h}(p)
\]
for all $g\in G$ it follows that
\[
\left\langle V^{h}(p),Z(p)\right\rangle \leq\left\langle \lim_{n\rightarrow
\infty}\frac{\int_{G}dg_{x_{n}}^{-1}(V^{h}(g(x_{n})))dg}{\operatorname*{Vol}%
(G(x_{n}))},Z(p)\right\rangle \leq\left\langle V^{h}(p),Z(p)\right\rangle
\]
proving the lemma.
\end{proof}

\bigskip

\begin{proof}
[Proof of Theorem \ref{main}]Let $V\in C^{\infty}(TM)$ be a critical point of
$F$ with unit $L^{2}-$norm. Since $V$ is non zero, there is $q\in M$ such that
$V\left(  q\right)  \neq0.$ Let $G$ be a Lie subgroup of $\operatorname*{Iso}%
(M)$ that leaves pointwise fixed a totally geodesic submanifold $T$ of $M,$
$\dim T\geq1.$ Choose a $p\in T$ and a nonzero vector $v\in T_{p}T.$ We have
$g(p)=p$ for all $g\in G$ and we claim that $dg_{p}(v)=v$ for all $g\in G$
too. Up to a multiplication of $v$ by a positive real number, we may assume
that $v$ is contained on a normal geodesic ball of $T_{p}T$. Set
$w=dg_{p}(v).$ Then, since $\exp_{p}v\in T,$ where $\exp_{p}$ it the
Riemannian exponential, we have
\[
\exp_{p}v=g(\exp_{p}v)=\exp_{g(p)}dg_{p}(v)=\exp_{p}w
\]
and then $v=w$.

Since $M$ is a compact rank $1$ symmetric space, there is an isometry $h$ of
the $M$ such that $h\left(  p\right)  =q$ and $dh_{p}^{-1}\left(  V\left(
q\right)  \right)  =v.$ Let $V^{h}$ be the vector field $h-$related to $V.$ We
claim that $V_{G}^{h}$ is not identically zero. Indeed, taking a sequence
$x_{n}\in M$ converging to $p$, we have, by the Lemma \ref{lim}
\[
V^{h}(p)=\lim_{n\rightarrow\infty}\frac{\int_{G}dg_{x_{n}}^{-1}(V^{h}%
(g(x_{n})))}{\operatorname*{Vol}(G(x_{n}))}=\underset{n\rightarrow\infty}%
{\lim}\frac{V_{G}^{h}\left(  x_{n}\right)  }{\operatorname*{Vol}(G(x_{n}))}.
\]
Hence, if $V_{G}^{h}\equiv0$ then $V^{h}\left(  p\right)  =0,$ a
contradiction! This proves the theorem.
\end{proof}

\section{\label{Rig}A characterization of spheres}

\qquad Rotationally symmetric manifolds are well known and much used as models
on comparison theorems on Geometric Analysis. We consider here a
generalization of such manifolds which we call \emph{two point symmetric
manifolds}:

\begin{definition}
\label{def}We say that a Riemannian $n-$dimensional manifold $M,$ $n\geq2,$ is
two point symmetric with center $p\in M$ if the isotropy subgroup
$G:=\operatorname*{Iso}\nolimits_{p}(M)$ of the isometry group of $M$ at $p$
is isomorphic to the isotropy subgroup of the isometry group of a two point
homogeneous space $S$ at any point in $S.$
\end{definition}

We note that the above definition is a natural generalization of the way used
in \cite{Cho} to define rotationally symmetric spaces. We also observe that
this definition is equivalent to the requirement that given points
$p_{1},p_{2},p_{3},p_{4}\in M$ that belong to a common geodesic sphere of $M$
centered at $p,$ if $d(p_{1},p_{2})=d(p_{3},p_{4}),$ where $d$ is the
Riemannian distance in $M$, then there is an isometry $i\in G$ such that
$i(p_{1})=p_{3}$ and $i(p_{2})=p_{4}$ (see \cite{Cha}, \cite{He})$.$ It is
also known that this two point homogenous characterization of $M$ around $p$
is equivalent to the isotropy of $G$ at any point $x\in M$ of a principal
orbit $G(x)$ of $G$ acting transitively on the Euclidean spheres centered at
the origin of $T_{x}G(x)$ (\cite{He})$.$

We also recall that a two point homogeneous space is isometric to a rank $1$
symmetric space. Accordingly to the symmetric space classification, it follows
that a Riemannian manifold $M$ is two point homogenous with center $p\in M$ if
$\operatorname*{Iso}\nolimits_{p}(M)$ is isomorphic to one of the following
Lie groups: $O(n),$ $U(1)\times U(n),$ $Sp(1)\times Sp(n)$ or
$\operatorname*{Spin}(9)$ (see \cite{He})$.$ When $\operatorname*{Iso}%
\nolimits_{p}(M)=O(n)$ the space $M$ is rotationally symmetric.

\begin{theorem}
\label{ricci}Let $M^{n}$ be a compact two point symmetric space with center
$p$, $n\geq2,$ with positive Ricci curvature $\operatorname*{Ric}%
\nolimits_{M}$ and assume that $\operatorname*{Ric}\nolimits_{M}\geq
(n-1)k^{2},$ $k>0$. Set $G=\operatorname*{Iso}\nolimits_{p}(M)$. Then the
infimum of $F$ on the\ $G-$invariant vector fields is bigger than or equal to
$k^{2}$ and the equality holds if and only if $M$ is a sphere of radius $1/k$.
\end{theorem}

\begin{proof}
Denote by $s:M\rightarrow\mathbb{R}$ the distance in $M$ to $p.$ Note that the
level sets of $s$ are geodesic spheres centered at $p\ $and that the mean
curvature and the norm of the second fundamental form of these geodesic
spheres depend only on $s.$ Set $l=\max s.$

Let $V\in C^{\infty}(TM)$ a $G-$invariant vector field such that $F(V)$ is the
positive infimum of $F$ on the space of $G-$invariant vector fields. Since the
subgroup of isotropy of $G$ at a point $p$ of a principal orbit of $G$ acts
transitively (by the derivative) on the spheres centered at origin of
$T_{p}G(p)$\ it follows from Lemma \ref{ortog} that $V$ may be written on the
form $V=\left\langle V,\operatorname{grad}s\right\rangle \operatorname{grad}.$
Define $h,f$ and $\phi$ as in Theorem \ref{inf}. The same reasoning used in
this theorem allows us to conclude that $F(V)\geq k^{2},\ $proving the first
part of the theorem. By Theorem \ref{inf} we know that if $M$ is a sphere of
radius $1/k$ then $F(V)=k^{2}.$ We now prove the converse. Thus, assume that
$F(V)=k^{2}.$ From the inequality obtained from Reilly's formula in Theorem
\ref{inf} we obtain
\begin{equation}
\left(  \Delta f\right)  ^{2}=n\left\vert \operatorname*{Hess}\left(
f\right)  \right\vert ^{2}. \label{laplacian}%
\end{equation}

Given $s\in\left(  0,l\right)  $ denote by $\left\vert B\right\vert \left(
s\right)  $ and $H\left(  s\right)  $ the norm of the second fundamental form
and the mean curvature of the geodesic sphere at a distance $s$ of $p$, with
respect to the unit normal vector pointing to the center $p.$ Since
$f=\phi\circ s,$ straightforward calculations give%
\[
\left(  \Delta f\right)  ^{2}=\left(  \phi^{\prime\prime}\right)
^{2}-2\left(  n-1\right)  \phi^{\prime\prime}\phi^{\prime}H+\left(
n-1\right)  ^{2}\left(  \phi^{\prime}\right)  ^{2}H^{2}%
\]
and%
\[
\left\vert \operatorname*{Hess}\left(  f\right)  \right\vert ^{2}=\left(
\phi^{\prime\prime}\right)  ^{2}+\left(  \phi^{\prime}\right)  ^{2}\left\vert
B\right\vert ^{2}.
\]
Then (\ref{laplacian}) is equivalent to%
\begin{equation}
\left(  \phi^{\prime\prime}+H\phi^{\prime}\right)  ^{2}=\left(  \phi^{\prime
}\right)  ^{2}\left[  -\frac{n\left\vert B\right\vert ^{2}}{n-1}%
+nH^{2}\right]  \label{f}%
\end{equation}
which implies that%
\begin{equation}
-\frac{\left\vert B\right\vert ^{2}}{n-1}+H^{2}\geq0 \label{B}%
\end{equation}
since $\phi^{\prime}$ cannot be identically zero (otherwise $V$ would be$,$
which is not the case). But, denoting by $\lambda_{i}$ the principal
curvatures of the geodesic spheres, we have
\begin{align*}
-\frac{\left\vert B\right\vert ^{2}}{n-1}+H^{2}  &  =\frac{-\left(
\overset{n-1}{\underset{i=1}{%
{\displaystyle\sum}
}}\lambda_{i}^{2}\right)  }{n-1}+\frac{\left(  \underset{j=1}{\overset{n-1}{%
{\displaystyle\sum}
}}\lambda_{j}\right)  ^{2}}{\left(  n-1\right)  ^{2}}\\
&  =-\sum_{i,j=1,\text{ }i<j}^{n}\left(  \lambda_{i}-\lambda_{j}\right)
^{2}\leq0
\end{align*}
so that
\[
-\frac{\left\vert B\right\vert ^{2}}{n-1}+H^{2}=0,
\]
and, from (\ref{f}),
\begin{equation}
\phi^{\prime\prime}+H\phi^{\prime}=0. \label{phi}%
\end{equation}

Since
\[
V=\operatorname{grad}f=\left(  h\circ s\right)  \operatorname{grad}s
\]
and $V$ is an infimum of $F$ we have
\[
-\operatorname{div}\nabla\left(  h(s)\operatorname{grad}s\right)
=k^{2}h(s)\operatorname{grad}s.
\]

A calculation gives%
\[
\operatorname{div}\nabla\left(  h\operatorname{grad}s\right)  =\left(
h^{\prime\prime}(s)-\left(  n-1\right)  H\left(  s\right)  h^{\prime}\left(
s\right)  -\left\vert B\right\vert ^{2}\left(  s\right)  h\left(  s\right)
\right)  \operatorname{grad}s
\]
so that%
\begin{equation}
h^{\prime\prime}-\left(  n-1\right)  Hh^{\prime}-\left\vert B\right\vert
^{2}h=-k^{2}h. \label{p}%
\end{equation}

But from (\ref{phi}) we have%
\[
h^{\prime\prime}-\left(  n-1\right)  Hh^{\prime}-\left\vert B\right\vert
^{2}h=h^{\prime\prime}+(n-1)\left[  H^{2}-\left\vert B\right\vert ^{2}/\left(
n-1\right)  \right]  h=h^{\prime\prime}%
\]
and hence%
\[
h^{\prime\prime}\left(  s\right)  =-k^{2}h(s).
\]

It follows that $h$ is of the form
\[
h(s)=A\cos(sk)+B\sin\left(  ks\right)
\]
and, since $\phi^{\prime}=h,$%
\[
\phi(s)=\frac{A}{k}\cos(sk)+\frac{B}{k}\sin\left(  ks\right)  .
\]

From $f(x)=\phi(s(x))$ we then obtain using (\ref{phi})%
\[
\Delta f=\phi^{\prime\prime}+\phi^{\prime}\Delta s=\phi^{\prime\prime
}-(n-1)H\phi^{\prime}=\phi^{\prime\prime}+\left(  n-1\right)  \phi
^{\prime\prime}=n\phi^{\prime\prime}=-nk^{2}\phi=-nk^{2}f.
\]
Hence $nk^{2}$ is a eigenvalue of the usual Laplacian. Under the hypothesis
$\operatorname*{Ric}\nolimits_{M}\geq\left(  n-1\right)  k^{2}$ this implies
that $M$ is a sphere of radius $1/k$ (\cite{O})$,$ finishing the proof of the theorem.
\end{proof}

\bigskip

\end{document}